\journal{Statistics \& Probability Letters}
\newtheorem{lemma}{Lemma}
\newtheorem{proposition}{Proposition}
\newproof{proof}{Proof}
\newcommand{\A}{\mathcal A}
\newcommand{\dd}{\mathrm d}
\newcommand{\kl}{\mathrm{kl}}
\newcommand{\KL}{\mathrm{KL}}
\newcommand{\E}{\mathbb E}
\newcommand{\R}{\mathbb R}
\newcommand{\X}{\mathcal X}
\newcommand{\Y}{\mathcal Y}
\definecolor{zaffre}{RGB}{0, 20, 168}
\DeclareMathOperator{\card}{card}
\begin{document}

\bibliographystyle{elsarticle-harv} 

\begin{frontmatter}
    \title{Uniform mean estimation for monotonic processes} 

    \author[1]{Eugenio Clerico}
    \ead{eugenio.clerico@gmail.com}

    \author[1]{Hamish E Flynn}
    \ead{hamishedward.flynn@upf.edu}

    \author[2]{Patrick Rebeschini}
    \ead{patrick.rebeschini@stats.ox.ac.uk}

    \affiliation[1]{organization={Universitat Pompeu Fabra},
    city={Barcelona},
    country={Spain}}
    \affiliation[2]{organization={Department of Statistics, University of Oxford},
    city={Oxford},
    country={UK}}

    \begin{abstract} 
        We consider the problem of deriving uniform confidence bands for the mean of a monotonic stochastic process, such as the cumulative distribution function (CDF) of a random variable, based on a sequence of i.i.d.\ observations. Our approach leverages the coin-betting framework, and inherits several favourable characteristics of coin-betting methods. In particular, for each point in the domain of the mean function, we obtain anytime-valid confidence intervals that are numerically tight and adapt to the variance of the observations. To derive uniform confidence bands, we employ a continuous union bound that crucially leverages monotonicity. In the case of CDF estimation, we also exploit the fact that the empirical CDF is piece-wise constant to obtain simple confidence bands that can be easily computed. In simulations, we find that our confidence bands for the CDF achieve state-of-the-art performance.
    \end{abstract}
    \begin{keyword}
        Anytime-valid inference, coin-betting, family-wise error, CDF estimation, DKW inequality.
    \end{keyword}
\end{frontmatter}

\section{Introduction}

We study the problem of \emph{uniformly} estimating the mean of a \emph{monotonic} stochastic process, namely an almost surely monotonic random function. As a motivating example, consider the problem of estimating the \emph{cumulative distribution function} (CDF), $F(y) = \mathbb{P}(X \leq y)$, of a random variable $X$. Given an i.i.d.~sample $(X_1, \dots, X_T)$, a natural estimator is the \emph{empirical} CDF, $\hat{F}_T(y) = \frac{1}{T}\sum_{t=1}^{T}\mathbb{I}\{X_t \leq y\}$, which has mean $F(y)$ and is non-decreasing in $y$. Monotonic risk functions, such as the risk functions used for calibrating black-box models \citep{bates2021distribution, nguyen2024data}, are another important application.

A vast literature in probability and statistics has focused on CDF estimation. Classical results include the Glivenko-Cantelli theorem \citep{glivenko1933sulla, cantelli1933sulla} and the Dvoretzky-Kiefer-Wolfowitz (DKW) inequality \citep{dvoretzky1956asymptotic}. The Glivenko-Cantelli theorem states that the empirical CDF converges uniformly to the CDF, while the DKW inequality provides a non-asymptotic uniform bound on the absolute difference between the CDF and the empirical CDF. Despite the fact that the constant in the DKW inequality is sharp \citep{massart1990tight}, this bound is often not entirely satisfactory for several reasons. First, the DKW inequality is not \emph{adaptive} to the variance of the empirical CDF, which implies it is typically loose at points where the CDF is close to 0 or 1. Second, the DKW inequality is not \emph{anytime-valid}, in the sense that it only holds for a fixed sample size and does not allow for optional continuation.

In this work, we develop a new method for uniformly estimating the mean of a monotonic bounded stochastic process. Our approach is based on the \emph{coin-betting} framework \citep{orabona2023tight, waudbysmith23estimating}, which is part of a broader line of work on using \emph{gambling} algorithms to derive concentration inequalities \citep{shafer2019game}, and is known to provide numerically tight, anytime-valid and variance-adaptive confidence intervals for bounded random variables. To obtain uniform confidence bands (namely families of confidence intervals holding simultaneously on the whole domain, as in \Cref{fig:3cdfs}), we use a refined continuous union bound of PAC-Bayesian inspiration \citep{alquier2024user, jang2023tighter}, which fundamentally leverages the monotonicity of the process. 

\paragraph{Related work} 
Recent work focuses on improving some of the classical results for CDF estimation. \citet{bartl2023variance} developed a variance-adaptive refinement of the DKW inequality. However, it is not anytime-valid and contains unspecified absolute constants, which makes it hard to implement and test in practice. An approach closer to ours is that of \citet{howard2022sequential}, who obtain anytime-valid and variance-adaptive confidence bands for the CDF using e-values. Their work takes a different strategy to achieve uniformly-valid confidence bands, relying on chaining rather than a PAC-Bayesian approach. Although their results have the advantage of yielding explicit bounds, we find our approach to be more intuitive and simple, as well as often leading to numerically tighter confidence bands (see \Cref{fig:comparison} and the discussion in \Cref{sec:toy}).

We remark that CDF estimation can be framed as a \emph{multiple testing} problem, with a continuum of hypotheses \citep{goldman2018comparing}. From this perspective, our method can be seen as a multiple testing procedure that controls the family-wise error rate (FWER). As a matter of fact, our method makes implicit use of tools from the framework of hypothesis testing with e-values \citep{wasserman2020universal, shafer2021testing, vovk2021evalues, grunwald2024safe, ramdas2024hypothesis}, such as e-processes \citep{ramdas2022admissible} and e-merging functions \citep{vovk2021evalues}. Standard multiple testing procedures based on p-values typically require strong assumptions on the dependence of the p-values used for different hypotheses (independence is often necessary), while their e-value based counterparts are often more flexible due to the merging properties of e-values. For this reason, several recent works have considered multiple testing via e-values. However, most of the current studies focus on controlling the (weaker) false discovery rate \citep{wang2022false, xu2023online, xu2024powerful}, rather than the FWER. The e-Bonferroni procedure \citep{hartog2025family} provides a FWER guarantee, but it uses a simple union bound argument that does not allow for uncountably many hypotheses.

The coin-betting framework has produced some of the tightest known confidence sequences for the mean of bounded random variables \citep{orabona2023tight, waudbysmith23estimating}. This approach can be seen as a particular instance of \emph{algorithmic mean estimation} based on e-values \citep{ramdas2023game}. Recently, \cite{clerico2024optimality} showed that the coin-betting framework is in fact \emph{optimal} among e-value based approaches. The idea of combining coin-betting with continuous union bounds (a.k.a.~PAC-Bayes) has already been proposed in the literature on generalisation bounds \citep{jang2023tighter}. However, we are not aware of any previous works that leverage monotonicity in order to \emph{derandomise} PAC-Bayesian bounds and convert them into uniformly-valid confidence bands.
\paragraph{Notation} We denote sequences as $(s_t)_{t\geq T_0}$, with $t$ an integer index and $T_0$ the initial index (typically $0$ or $1$). We use $\mathbb P$ to express probability with respect to all the randomness involved. For instance, if $(X_t)_{t\geq 1}$ is a sequence of i.i.d.\ draws from a probability law $P$, we  write $\mathbb P(X_t\geq1/2\,,\,\forall t\geq 1)$, with obvious meaning. For integers $n < m$, $[n :m] = \{n, n+1, \dots, m -1, m\}$ denotes the set of all integers between (and including) $n$ and $m$. $\log$ denotes the natural logarithm.

\begin{figure}[t!]
    \centering
    \includegraphics[width=\textwidth]{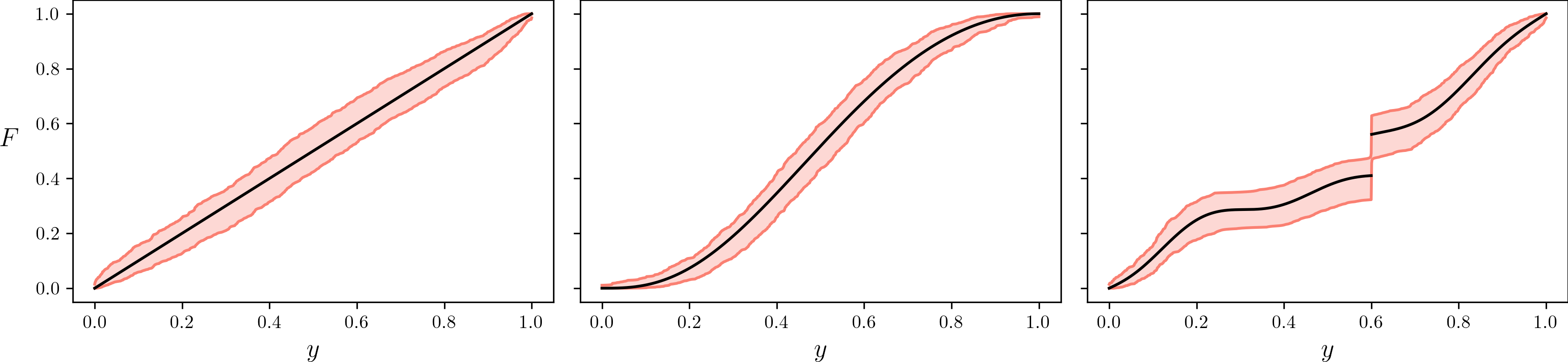}
    \caption{CDF confidence bands from $T=1000$ i.i.d.~observations sampled from three different distributions supported in $[0,1]$. Our method to produce the highlighted bands ensures that they contain the whole black line (the true CDF $F$) with probability at least $0.95$ on the random datasets used to generate them. From left to right (in black): $F(y)=y$, $F(y)=\sin(\pi\sqrt{y}/2)^6$, and $F(y)=(3 + \mathbb{I}\{y\geq0.6\})/4 + \sin(2\pi y^{0.9})^3/10$. The confidence bands reported are those discussed in \Cref{sec:cdf}, and the code used for the numerical evaluation of the binary relative entropy is from \cite{clerico2022conditionally}.}
    \label{fig:3cdfs}
\end{figure}
\section{Setting}

We observe a sequence $(X_t)_{t\geq 1}$ of i.i.d.\ realisations from a Borel probability law $P$ on $\X \subset \R^d$. We are given a Borel function $f: \X \times \Y \to [0,1]$,\footnote{We note that although for simplicity we only consider the case of a map $f$ valued in $[0,1]$, our results are more general. Indeed, if $f$ takes value in a bounded interval $I$, we can always consider an increasing affine injection $\iota:I\to[0,1]$. Then, any confidence set for $\iota\circ f$ translates immediately into a confidence set for $f$.} where $\Y$ is an interval in $\R$. We assume that, for each $x \in \X$, the map $y \mapsto f(x,y)$ is non-decreasing. Our goal is to construct a sequence of confidence bands for the mean function $F: \Y \to [0,1]$, defined as $F(y) = \E_P[f(X,y)]$, which is updated after each $X_t$ in the sequence is observed. In particular, we wish to construct confidence bands that are empirically tight and hold uniformly at all times $t \geq 1$ and all points $y \in \Y$ simultaneously. This unicorn coverage condition is satisfied by any \emph{uniform confidence sequence} for $F$. For any level $\delta \in (0, 1]$, a $(1-\delta)$-uniform confidence sequence for $F$ is a collection of intervals $([L_t(y), U_t(y)])_{t\geq 1,y\in\Y}$, all contained in $[0,1]$, that satisfies the coverage condition
\begin{equation*}
\mathbb P\Big(\forall t \geq 1\,,\,\forall y \in \Y\,,\;F(y) \in [L_t(y), U_t(y)]\Big) \geq 1 - \delta\,.
\end{equation*}
In words, the coverage condition states that with high probability, the value $F(y)$ lies in the interval $[L_t(y), U_t(y)]$ for all times $t \geq 1$ and all points $y \in \Y$ simultaneously. Each sequence $([L_t(y), U_t(y)])_{t \geq 1}$ should of course be adapted to the natural filtration induced by the sequence $(X_t)_{t\geq 1}$. Ideally, we would like the width of the interval $[L_t(y), U_t(y)]$ to scale with the variance of $f(X,y)$, as opposed to the range of values that $f(X,y)$ can take. This variance can substantially change for different values of $y$, so we aim for a confidence band whose width can vary as a function of $y$. 

\section{Method}

We briefly explain how to derive concentration inequalities for bounded random variables via coin-betting \citep{orabona2023tight, waudbysmith23estimating}, and then describe our method for obtaining uniform concentration inequalities for monotonic bounded processes.

\paragraph{Coin-betting} In the coin-betting game, a gambler bets on the outcomes of continuous coins $c_t \in [-a, b]$, where $a \geq 0$, $b \geq 0$, and at least one of these two values is strictly positive. In each round $t$, the gambler bets a proportion $\alpha_t \in [-1/b, 1/a]$\footnote{If $a$ or $b$ is $0$, we adopt the convention $1/0=+\infty$ and allow $\alpha_t$ to be infinite.} of their current wealth $M_{t-1}$, observes $c_t$, and then receives a reward $\alpha_tc_tM_{t-1}$. One can interpret $\mathrm{sgn}(\alpha_t)$ as a bet on the sign of $c_t$ and $|\alpha_t|$ as the size of the bet. The constraint $\alpha_t \in [-1/b, 1/a]$ ensures that the wealth $M_t$ is always non-negative. If the initial wealth of the gambler is one, then the wealth after $T$ rounds is $M_T = \prod_{t=1}^{T}(1 + \alpha_tc_t)$.\footnote{\label{foot:0inf}When necessary, we adopt the convention $0\times\infty = 0$.} The gambler's aim is to accumulate wealth as quickly as possible, and is formalised using the notion of regret. The regret of the gambler is the ratio between the wealth of the best constant betting fraction and the wealth of the gambler. More concretely, the regret of the gambler after $T$ rounds is defined as
\begin{equation*}
R_T = \frac{\sup_{\alpha}\prod_{t=1}^{T}(1 + \alpha c_t)}{\prod_{t=1}^{T}(1 + \alpha_t c_t)}\,.\footnote{We set $R_T=1$ if both numerator and denominator are infinity or if both are null.}
\end{equation*}
If the gambler chooses each $\alpha_t$ according to the Krichevsky-Trofimov (KT) mixture forecaster\footnote{\label{foot:KT}The KT mixture forecaster is classically defined for the investment problem in a binary market. However, it can be defined for the coin-betting problem as $\alpha_t = -\frac{1}{b} + \left(\frac{1}{a}+\frac{1}{b}\right)p_t$, where $$p_t = \frac{\int_0^1 p\left(\prod_{i=1}^{t-1}\left(p\left(1+\frac{c_i}{a}\right) + (1-p)\left(1-\frac{c_i}{b}\right)\right)\right)\frac{\dd p}{\sqrt{p(1-p)}}}{\int_0^1 \left(\prod_{i=1}^{t-1}\left(p\left(1+\frac{c_i}{a}\right) + (1-p)\left(1-\frac{c_i}{b}\right)\right)\right)\frac{\dd p}{\sqrt{p(1-p)}}}\,,$$ if $a$ and $b$ are both non-zero (see the discussion in Section II.C of \citealp{orabona2023tight}). If $a=0$ we let $\alpha_t=+\infty$, and if $b=0$ we let $\alpha_t = -\infty$.}, then for all sequences of coins in $[-1,1]$, the regret is upper bounded as
\begin{equation}
\label{eq:regret}R_T^\textrm{KT}\leq 2\sqrt{T}\,.
\end{equation} (see chapters 9 and 10 in \citealp{cesabianchi2006prediction}, in particular the bound of Theorem 9.4 therein).

\paragraph{Concentration via coin-betting} 
For each $y \in \Y$, we can construct a coin-betting game using the observations $(f(X_t, y) - F(y))_{t \geq 1}$. Specifically, let $c_t(y) = f(X_t, y) - F(y)$. For each $y \in \Y$, $(c_t(y))_{t \geq 1}$ is a sequence of i.i.d.\ random variables taking values in $[-F(y), 1 - F(y)]$. For $\mu \in [0,1]$, we define $\mathcal{A}_{\mu} = [-\frac1{1-\mu}, \frac1{\mu}]$. For each $y \in \Y$, we choose a betting strategy $(\alpha_t(y))_{t \geq 1}$, which is a predictable (w.r.t $(X_t)_{t \geq 1}$) sequence of random variables, each taking values in $\mathcal{A}_{F(y)}$. We require that the mapping $y\mapsto \alpha_t(y)$ is Borel, for all $t$. For each $y \in \Y$, the wealth $M_T(y)$ and the regret $R_T(y)$ of the betting strategy after $T$ rounds are
\begin{equation*}
M_T(y) = \prod_{t=1}^T \Big(1+\alpha_{t}(y)\big(f(X_t, y)-F(y)\big)\Big)\,, \quad R_T(y) = \frac{\sup_{\alpha \in \mathcal{A}_{F(y)}}\prod_{t=1}^{T}\Big(1+\alpha\big(f(X_t, y)-F(y)\big)\Big)}{\prod_{t=1}^{T}\Big(1+\alpha_{t}(y)\big(f(X_t, y)-F(y)\big)\Big)}\,.
\end{equation*}
Since the coins are now zero-mean i.i.d.\ random variables, for any fixed $y$ it is straightforward to check that $M_T(y)$ is a non-negative martingale with initial value 1. From Ville's inequality, for any $\delta \in (0, 1]$,
\begin{equation}\label{eq:ville}
\mathbb{P}\Big(\forall T \geq 1\,,\;\log(M_T(y)) \leq \log(1/\delta)\Big) \geq 1 - \delta\,.
\end{equation}

For any given betting strategy, \eqref{eq:ville} could be used to construct a confidence sequence for $F(y)$. However, to use this confidence sequence, we would have to run the betting strategy, which may be expensive. Instead, \citet{orabona2023tight} introduce the quantity\footnote{If necessary, we adopt the convention $\log 0 + \log(+\infty) = -\infty$, cf.\ \cref{foot:0inf}.}
\begin{equation*}
\psi_T(y, \mu) = \sup_{\alpha \in \mathcal{A}_{\mu}}\left\{\sum_{t=1}^{T}\log\big(1 + \alpha(f(X_t, y) - \mu)\big)\right\}\,.
\end{equation*}
From the definition of the regret $R_T(y)$, we notice that $\psi_T(y, F(y)) = \log(R_T(y)) + \log(M_T(y))$. Thus, if we set each $\alpha_t(y)$ using the KT mixture forecaster\footnote{Such a choice is compatible with the measurability requirement for $y\mapsto\alpha_t(y)$. This can easily be deduced from the explicit form given in \cref{foot:KT}.}, \eqref{eq:regret} yields
\begin{equation}
\mathbb{P}\Big(\forall T \geq 1\,,\;\psi_T(y, F(y)) \leq \log\tfrac{2\sqrt{T}}{\delta}\Big) \geq 1 - \delta\,.\label{eq:psi_ineq}
\end{equation}

Since $\psi_T(y, F(y))$ does not depend on the betting strategy used, this inequality can be leveraged to construct a confidence sequence for $F(y)$ that does not depend on $\alpha_t(y)$. For $C\geq 0$ (possibly infinite), we define the upper, $\psi_{T,+}^{-1}$, and lower, $\psi_{T,-}^{-1}$, inverses (w.r.t.\ $\mu$) of $\psi_T(y, \mu)$ as
\begin{equation}\label{eq:psinv}
\psi_{T, +}^{-1}(y, C) = \sup\big\{\mu\in[0,1]\,:\,\psi_{T}(y, \mu)\leq C\big\}\,, \qquad \psi_{T, -}^{-1}(y, C) = \inf\big\{\mu\in[0,1]\,:\,\psi_{T}(y, \mu)\leq C\big\}\,.
\end{equation}
Using these definitions,
\begin{equation*}
\mathbb{P}\Big(\forall T \geq 1\,,\;F(y) \in \big[\psi_{T, -}^{-1}(y, \log\tfrac{2\sqrt{T}}{\delta})\,, \psi_{T, +}^{-1}(y, \log\tfrac{2\sqrt{T}}{\delta})\big]\Big) \geq 1 - \delta\,.
\end{equation*}

To obtain a uniform confidence sequence for $F(y)$, we need a stronger version of the inequality in \eqref{eq:psi_ineq}, holding simultaneously for all $y \in \Y$. Recently, \citet{jang2023tighter} leveraged the PAC-Bayesian framework \citep{alquier2024user} to obtain bounds for $F$ averaged on $y$. Let $\pi$ be any fixed probability measure on $\Y$ (whose choice does not depend on the observations $(X_t)_{t\geq 1}$). Using Donsker and Varadhan's variational formula for the KL divergence \citep{donsker1976asymptotic}, for every $\rho$ and $\pi$,
\begin{equation}\label{eq:bound}
    \int_{\Y}\psi_T(y, F(y))\dd\rho(y) \leq \KL(\rho|\pi) + \log\int_\Y M_T(y)\dd\pi(y) +\log(2\sqrt T)\,.
\end{equation}
We remark that the above integrals make sense as both $y\mapsto M_T(y)$ and $y\mapsto\psi_T(y,F(y))$ are non-negative Borel measurable functions.\footnote{The non-negativity is trivial. To show the measurability, let $g_\alpha(y) = \sum_{t=1}^{T}\log(1+\alpha(f(X_t,y)-F(y)))$ if $\alpha\in\A_{F(y)}$, and $-\infty$ otherwise. Let $G(y) = \sup_{\alpha\in\R}g_\alpha(y)$. Then, since $0\in\A_{F(y)}$ for any $y$ and $g_0=0$, we have that $\psi_T(y,F(y)) = G(y)$. Now, the measurability of $F$ and $f(X_t,\cdot)$ implies that each $g_\alpha$ is measurable. Moreover, using continuity, and from the fact that $\A_{F(y)}$ is an interval containing $0$, one can easily verify that $G(y)= \sup_{\alpha\in\mathbb Q\cup\{\pm\infty\}}g_\alpha(y)$, and hence $G$ is measurable as it is the supremum of countably many Borel measurable functions.} Since $\int_{\Y}M_T(y)\dd \pi(y)$ is also a non-negative martingale with initial value 1,\footnote{Note that this can be seen as an instance of merging of e-processes (see \citealp{vovk2021evalues, ramdas2024hypothesis}).} it too is bounded by $1/\delta$ with probability at least $1 - \delta$ by Ville's inequality. Therefore,
\begin{equation*}
\mathbb{P}\left(\forall T \geq 1\,, \;\sup_{\rho}\left\{\int_{\Y}\psi_T(y, F(y))\dd\rho(y) - \KL(\rho|\pi)\right\} \leq \log\tfrac{2\sqrt{T}}{\delta}\right) \geq 1 - \delta\,.
\end{equation*}

\paragraph{Leveraging monotonicity} 
So far we have a concentration inequality that holds uniformly on the set of probability measures on $\Y$, but is vacuous when $\rho$ is a point mass. To obtain an inequality that holds uniformly on $\Y$, we exploit the fact that the monotonicity of $f$ implies that the upper and lower inverse of $\psi_T$ are monotonic in their arguments.

\begin{lemma}\label{lemma:monul}
    For any fixed $C\geq 0$ and any $T$, both $y\mapsto\psi_{T,+}^{-1}(y,C)$ and $y\mapsto\psi_{T,-}^{-1}(y,C)$ are non-decreasing. For any fixed $y\in\Y$ and any $T$, $C\mapsto\psi_{T,+}^{-1}(y,C)$ is non-decreasing and $C\mapsto\psi_{T,-}^{-1}(y,C)$ is non-increasing.
\end{lemma}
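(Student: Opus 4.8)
The plan is to reduce the statement to two ingredients: an essentially tautological monotonicity in $C$, and a structural description of $\mu\mapsto\psi_T(y,\mu)$ from which the monotonicity in $y$ follows by propagating the monotonicity of $f(x,\cdot)$ through the supremum defining $\psi_T$. The part about $C$ is immediate: for fixed $y$ and $T$, the sublevel set $S_y(C):=\{\mu\in[0,1]:\psi_T(y,\mu)\le C\}$ grows with $C$, so $\psi_{T,+}^{-1}(y,C)=\sup S_y(C)$ is non-decreasing and $\psi_{T,-}^{-1}(y,C)=\inf S_y(C)$ is non-increasing in $C$. (Each $S_y(C)$ is non-empty for $C\ge 0$: writing $\hat F_T(y):=\frac1T\sum_{t=1}^Tf(X_t,y)\in[0,1]$ for the empirical mean, fact (i) below gives $\psi_T(y,\hat F_T(y))=0$.)

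For the dependence on $y$, I would first record that, for fixed $y$ and $\mu$, the map $\alpha\mapsto\sum_{t=1}^T\log\big(1+\alpha(f(X_t,y)-\mu)\big)$ is concave on $\A_\mu$ (each summand being concave in $\alpha$ where finite), vanishes at $\alpha=0$, and has derivative $T(\hat F_T(y)-\mu)$ there. This yields three facts, valid in the extended reals and covering $\mu\in\{0,1\}$ under the paper's conventions: (i) $\psi_T(y,\hat F_T(y))=0$; (ii) if $\mu\ge\hat F_T(y)$, the supremum over $\A_\mu$ is unchanged if we restrict to $\{\alpha\le0\}$, so $\psi_T(y,\mu)=\psi_T^-(y,\mu):=\sup_{\alpha\in[-\frac1{1-\mu},0]}\sum_{t=1}^T\log(1+\alpha(f(X_t,y)-\mu))$; (iii) symmetrically, if $\mu\le\hat F_T(y)$, then $\psi_T(y,\mu)=\psi_T^+(y,\mu):=\sup_{\alpha\in[0,\frac1\mu]}\sum_{t=1}^T\log(1+\alpha(f(X_t,y)-\mu))$. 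Since $f(x,\cdot)$ is non-decreasing, $\hat F_T(\cdot)$ is non-decreasing; moreover, for fixed $\mu$, each summand $\log(1+\alpha(f(X_t,y)-\mu))$ is non-increasing in $y$ when $\alpha\le0$ and non-decreasing in $y$ when $\alpha\ge0$, while the constraint intervals in (ii)--(iii) do not depend on $y$; hence $y\mapsto\psi_T^-(y,\mu)$ is non-increasing and $y\mapsto\psi_T^+(y,\mu)$ is non-decreasing.

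Assembling these facts: fix $y\le y'$ in $\Y$ and $C\ge 0$, so $\hat F_T(y)\le\hat F_T(y')$. For the upper inverse, take any $\mu\in S_y(C)$; if $\mu\le\hat F_T(y')$, then $\psi_T(y',\hat F_T(y'))=0\le C$ by (i), whereas if $\mu>\hat F_T(y')\,(\ge\hat F_T(y))$, then $\psi_T(y',\mu)=\psi_T^-(y',\mu)\le\psi_T^-(y,\mu)=\psi_T(y,\mu)\le C$ by (ii); in both cases $\psi_{T,+}^{-1}(y',C)\ge\mu$, and taking the supremum over $\mu\in S_y(C)$ gives $\psi_{T,+}^{-1}(y,C)\le\psi_{T,+}^{-1}(y',C)$. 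For the lower inverse, take any $\mu\in S_{y'}(C)$; if $\mu\ge\hat F_T(y)$, then $\psi_T(y,\hat F_T(y))=0\le C$ by (i), so $\psi_{T,-}^{-1}(y,C)\le\hat F_T(y)\le\mu$, whereas if $\mu<\hat F_T(y)\,(\le\hat F_T(y'))$, then $\psi_T(y,\mu)=\psi_T^+(y,\mu)\le\psi_T^+(y',\mu)=\psi_T(y',\mu)\le C$ by (iii), so $\psi_{T,-}^{-1}(y,C)\le\mu$; taking the infimum over $\mu\in S_{y'}(C)$ gives $\psi_{T,-}^{-1}(y,C)\le\psi_{T,-}^{-1}(y',C)$.

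The delicate point I expect to spend the most care on is (i)--(iii): one must verify that the objective is genuinely concave in $\alpha$ on all of $\A_\mu$ — including when $\A_\mu$ is unbounded ($\mu\in\{0,1\}$) and when some summand equals $-\infty$ at an endpoint — so that the sign of its derivative at $\alpha=0$ really does dictate on which side of $0$ the supremum lies. Granting that, the remainder is routine, and in particular no continuity of $\psi_T$ nor explicit invertibility is needed.
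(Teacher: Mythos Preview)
Your proof is correct and follows essentially the same approach as the paper: the paper defers to two auxiliary lemmas (stated for an abstract $\Psi(z,\mu)$ with $z\in[0,1]^T$) whose content is exactly your facts (i)--(iii) and the ensuing set-inclusion argument, the only cosmetic difference being that the paper obtains the restriction to $\alpha\le0$ (resp.\ $\alpha\ge0$) via Jensen's inequality rather than via concavity and the sign of the derivative at $\alpha=0$. Your case analysis in the ``assembling'' step is precisely the unpacking of the paper's one-line set-containment argument.
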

\begin{proof}
    The monotonicity in $y$ follows from \Cref{lemma:monotonic} and the fact that $f$ is non-decreasing in $y$. The monotonicity in $C$ is obvious.\qed
\end{proof}

We now state our main result for the case $\Y=[0,1]$, though it can readily be applied to general intervals. Indeed, one can always reparameterise $\Y$ via an increasing injection $\iota:\Y\to[0,1]$, whose image is an interval with closure $[0,1]$. If $0$ or $1$ are not in $\iota(\Y)$, one can define an extension $\tilde F$ (on $[0,1]$) of $F\circ\iota$, by taking the necessary limits ($y\to0$ and/or $y\to1$) of $F\circ\iota$ (which exist due to monotonicity). One can then translate a uniform guarantee for $\tilde F$ into a uniform guarantee for $F$.

\begin{proposition}\label{pro:psi_cs}
    Let $\Y=[0,1]$. With probability at least $1-\delta$, uniformly on $T\geq1$ and on $y_0\in\Y$, we have 
    $$F(y_0)\in\left[\sup_{y_-\in[0,y_0]}\psi_{T,-}^{-1}\Big(y_-, \log\tfrac{2\sqrt T}{(y_0-y_-)\delta}\Big),\inf_{y_+\in[y_0,1]}\psi_{T,+}^{-1}\Big(y_+,\log\tfrac{2\sqrt T}{(y_+-y_0)\delta}\Big)\right]\,.$$
\end{proposition}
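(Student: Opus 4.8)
The plan is to combine the PAC-Bayesian bound obtained above with the monotonicity recorded in \Cref{lemma:monul}, using the latter to \emph{derandomise} the averaged inequality into a pointwise one. I would instantiate \eqref{eq:bound} (together with the martingale/Ville argument following it) with the prior $\pi$ equal to the uniform (Lebesgue) measure on $\Y=[0,1]$. By Donsker--Varadhan's formula, $\sup_{\rho}\{\int_{\Y}\psi_T(y,F(y))\dd\rho(y)-\KL(\rho|\pi)\}=\log\int_0^1 e^{\psi_T(y,F(y))}\dd y$, so the displayed bound just before \Cref{lemma:monul} becomes: with probability at least $1-\delta$, for every $T\ge1$,
\[
\int_0^1 e^{\psi_T(y,F(y))}\,\dd y\ \le\ \tfrac{2\sqrt T}{\delta}\,.
\]
Call $E$ this event; it suffices to show that on $E$ the asserted two-sided containment holds for all $T\ge1$ and all $y_0\in[0,1]$. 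From now on fix such $T$, $y_0$ and work on $E$.

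I treat the upper endpoint; the lower one is symmetric. Fix $y_+\in(y_0,1]$, set $C=\log\frac{2\sqrt T}{(y_+-y_0)\delta}>0$, and write $\hat F_T(y):=\frac1T\sum_{t=1}^T f(X_t,y)$. It is enough to prove $F(y_0)\le\psi_{T,+}^{-1}(y_+,C)$, since taking the infimum over $y_+\in[y_0,1]$ then gives the upper bound (the value $y_+=y_0$ contributes the vacuous $\psi_{T,+}^{-1}(y_0,+\infty)=1$). A short concavity argument gives $\psi_T(y,\hat F_T(y))=0$, so $\hat F_T(y_+)\le\psi_{T,+}^{-1}(y_+,C)$ and there is nothing to prove when $F(y_0)\le\hat F_T(y_+)$. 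In the complementary regime $F(y_0)>\hat F_T(y_+)$ it suffices to show $\psi_T(y_+,F(y_0))\le C$, as that puts $F(y_0)$ in $\{\mu:\psi_T(y_+,\mu)\le C\}$, hence at most its supremum $\psi_{T,+}^{-1}(y_+,C)$. To bound $\psi_T(y_+,F(y_0))$ I would establish the \emph{diagonal domination} inequality $\psi_T(y,F(y))\ge\psi_T(y_+,F(y_0))$ for all $y\in[y_0,y_+]$; exponentiating and using the defining inequality of $E$ then yields $(y_+-y_0)\,e^{\psi_T(y_+,F(y_0))}\le\int_{y_0}^{y_+}e^{\psi_T(y,F(y))}\dd y\le\tfrac{2\sqrt T}{\delta}$, which is precisely $\psi_T(y_+,F(y_0))\le C$.

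The diagonal domination rests on two elementary monotonicity properties of $\psi_T$, closely related to those invoked in the proof of \Cref{lemma:monul} (cf.\ \Cref{lemma:monotonic}). Since $f(x,\cdot)$, and hence $F$ and $\hat F_T$, are non-decreasing, for $y\in[y_0,y_+]$ we have $\hat F_T(y)\le\hat F_T(y_+)<F(y_0)\le F(y)$. Now (i) $\mu\mapsto\psi_T(y_+,\mu)$ is non-decreasing on $[\hat F_T(y_+),1]$: the inner objective $\alpha\mapsto\sum_t\log(1+\alpha(f(X_t,y_+)-\mu))$ is concave with derivative $T(\hat F_T(y_+)-\mu)\le0$ at $\alpha=0$ there, so its maximiser is $\le0$, and feeding a fixed such $\alpha$ into the objective each summand increases in $\mu$; this gives $\psi_T(y_+,F(y_0))\le\psi_T(y_+,F(y))$. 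And (ii) for fixed $\mu\ge\hat F_T(y_+)$, $z\mapsto\psi_T(z,\mu)$ is non-increasing on $[y_0,y_+]$: a maximiser $\alpha\le0$ of $\psi_T(y_+,\mu)$ stays feasible (the set $\A_\mu$ does not depend on $z$) and, since $f(X_t,y)\le f(X_t,y_+)$, performs at least as well in the $y$-problem; this gives $\psi_T(y_+,F(y))\le\psi_T(y,F(y))$. Chaining (i) and (ii) proves the claim. The lower endpoint is the mirror image, exchanging $0\leftrightarrow1$ and $\hat F_T(y_+)\leftrightarrow\hat F_T(y_-)$: if $F(y_0)\ge\hat F_T(y_-)$ then $F(y_0)\ge\hat F_T(y_-)\ge\psi_{T,-}^{-1}(y_-,\log\tfrac{2\sqrt T}{(y_0-y_-)\delta})$, and otherwise one proves $\psi_T(y,F(y))\ge\psi_T(y_-,F(y_0))$ for $y\in[y_-,y_0]$ and integrates over $[y_-,y_0]$.

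I expect the diagonal domination to be the crux. The tempting shortcut — that $y\mapsto\psi_T(y,F(y))$ is itself monotone, so the integral bound on $E$ could be derandomised directly at $y_0$ — fails, since this map measures the ``evidence against the truth at $y$'' and has no reason to be monotone. The remedy is to compare the on-diagonal values with the \emph{off-diagonal} quantity $\psi_T(y_+,F(y_0))$ and to split according to whether $F(y_0)$ lies above or below $\hat F_T(y_+)$; only after this split do the quasi-convexity of $\psi_T(y_+,\cdot)$ (which vanishes at $\hat F_T(y_+)$) and the monotonicity of $\psi_T$ in its first argument both point the required way. The remaining points — measurability of $y\mapsto\sup_{y_-}(\cdots)$ and $y\mapsto\inf_{y_+}(\cdots)$, the conventions $1/0=+\infty$, and the endpoints $y_0\in\{0,1\}$ — are routine.
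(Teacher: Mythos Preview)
Your proof is correct. Both your argument and the paper's combine the integrated PAC-Bayes/Ville bound (with uniform prior on $[0,1]$) and the monotonicity of $\psi_T$ encoded in \Cref{lemma:Psi}, but the derandomisation is organised differently. For fixed $y_0\le y_+$, the paper simply picks a single point $\hat y\in[y_0,y_+]$ at which $\psi_T(\hat y,F(\hat y))$ is at most its average over the interval (which is bounded by $C$ via \eqref{eq:bound}), deduces $F(\hat y)\le\psi_{T,+}^{-1}(\hat y,C)$, and then replaces $\hat y$ by $y_+$ on the right and $F(\hat y)$ by $F(y_0)$ on the left using the monotonicity of $F$ and of $\psi_{T,+}^{-1}$ from \Cref{lemma:monul}; no case split is needed. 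You instead establish the pointwise diagonal domination $\psi_T(y,F(y))\ge\psi_T(y_+,F(y_0))$ for all $y\in[y_0,y_+]$ (under the case $F(y_0)>\hat F_T(y_+)$, the complementary case being immediate) and integrate the exponential to bound $\psi_T(y_+,F(y_0))$ directly. Your route is a bit more self-contained, re-deriving the relevant monotonicity of $\psi_T$ in both arguments rather than passing through the inverse map; the paper's route is slightly shorter and sidesteps the case split by working at a single well-chosen $\hat y$.
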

\begin{proof}   
    For $y_1,y_2\in [0,1]$, we let $\rho_{y_1,y_2}$ denote the uniform distribution on $[y_1,y_2]$ (a Dirac mass if $y_1=y_2$). In what follows we adopt the convention $1/0=+\infty$ and $\log0=-\infty$.
    
    Fix $y_0\in\Y$ and $y_+\in[y_0,1]$. There exists $\hat y\in[y_0,y_+]$ such that $\psi_T(\hat y, F(\hat y))\leq\int_{0}^{1}\psi_T(y, F(y))\dd\rho_{y_0,y_+}(y)$. When using the KT forecaster, since $\KL(\rho_{y_0,y_+}|\rho_{0,1}) = -\log(y_+-y_0)$, from \eqref{eq:psinv} and \eqref{eq:bound} we obtain
    $$F(\hat y) \leq \psi_{T,+}^{-1}\left(\hat y, \log\tfrac{2\sqrt T}{y_+-y_0} + \textstyle\int_0^1M_T(y)\dd y\right)\,.$$
    From the monotonicity of $F$ and \Cref{lemma:monul}, it follows that
    $$F(y_0) \leq \psi_{T,+}^{-1}\left(y_+, \log\tfrac{2\sqrt T}{y_+-y_0} + \textstyle\int_0^1M_T(y)\dd y\right)\,.$$
    Similar arguments yield that, for any $y_-\in[0,y_0]$, we have
    $$F(y_0) \geq \psi_{T,-}^{-1}\left(y_-, \log\tfrac{2\sqrt T}{y_0-y_-} + \textstyle\int_0^1M_T(y)\dd y\right)\,.$$
    Thus, for all $y_0\in\Y$,
    $$\sup_{y_-\in[0,y_0]}\psi_{T,-}^{-1}\left(y_-, \log\tfrac{2\sqrt T}{y_0-y_-} + \textstyle\int_0^1M_T(y)\dd y\right)\leq F(y_0)\leq \inf_{y_+\in[y_0,1]}\psi_{T,+}^{-1}\left(y_+, \log\tfrac{2\sqrt T}{y_+-y_0} + \textstyle\int_0^1M_T(y)\dd y\right)\,.$$
    From \Cref{lemma:monul}, the above upper and lower bounds are respectively non-decreasing and non-increasing in $\int_0^1M_T(y)\dd y$. The conclusion follows from Ville's inequality as $\int_0^1M_T(y)\dd y$ is a non-negative martingale.\qed
\end{proof}
It is worth mentioning that $\psi_T$ and its lower and upper inverses can be computed efficiently, thanks to the quasi-convexity of $\psi_T$ in $y$. We refer to the related discussion in \cite{orabona2023tight} for more details.

\section{Relaxations}

The intervals in \Cref{pro:psi_cs} do not lend themselves readily to interpretation. By lower bounding $\psi(y, \mu)$, we can obtain more explicit confidence intervals.

\paragraph{Binary relative entropy} 

Confidence intervals for bounded random variables are often expressed in terms of the KL divergence between Bernoulli distributions, which we refer to as the \emph{binary relative entropy}. For $p, q \in (0, 1)$, we define the binary relative entropy as
\begin{equation*}
\kl(p,q) = p\log\tfrac pq + (1-p)\log\tfrac{1-p}{1-q}\,.
\end{equation*}
We also let $\kl(0,q) = \log\frac1{1-q}$ for $q\in[0,1)$, and $\kl(1,q) = \log\frac1q$ for $q\in(0,1]$. If $p\in(0,1]$ then $\kl(p,0) = +\infty$, and $\kl(p,1)=+\infty$ if $p\in[0,1)$. For $C \geq 0$, we define the upper and lower inverses (w.r.t.\ $q$) of $\kl$ as
\begin{equation*}
\kl^{-1}_+(p, C) = \sup\big\{q\in[0,1]\,:\,\kl(p,q)\leq C\big\}\,, \qquad \kl^{-1}_-(p, C) = \inf\big\{q\in[0,1]\,:\,\kl(p,q)\leq C\big\}\,.
\end{equation*}
Proposition 1 in \cite{orabona2023tight} states that, for every $\mu \in [0, 1]$, \begin{equation}\label{eq:psikl}\psi_T(y, \mu) \geq T\,\kl(\hat{F}_T(y), \mu)\,,\end{equation} where $\hat{F}_T(y) = \frac{1}{T}\sum_{t=1}^{T}f(X_t, y)$. Moreover, this holds with equality whenever $(f(X_t, y))_{t \geq 1}$ is a binary sequence. From \eqref{eq:psikl}, it follows that the inverses of $\psi_T$ and $\kl$ satisfy $\psi^{-1}_{T,+}(y,C) \leq \kl^{-1}_+(\hat{F}_T(y),C/T)$ and $\psi^{-1}_{T,-}(y,C) \geq \kl^{-1}_-(\hat{F}_T(y),C/T)$. Therefore, from \Cref{pro:psi_cs}, with probability at least $1 - \delta$, uniformly for all $T \geq 1$ and all $y \in [0, 1]$,
\begin{equation}
F(y) \in \left[\sup_{y_- \in [0, y]}\kl_-^{-1}\left(\hat{F}_T(y), \tfrac{1}{T}\log\tfrac{2\sqrt{T}}{(y-y_-)\delta}\right)\,,\, \inf_{y_+ \in [y, 1]}\kl_+^{-1}\left(\hat{F}_T(y), \tfrac{1}{T}\log\tfrac{2\sqrt{T}}{(y-y_-)\delta}\right)\right]\,.\label{eqn:klset}
\end{equation}
This confidence sequence depends only on the sample mean $\hat{F}_T(y)$, so it can be updated without storing the entire sequence $(X_t)_{t \geq 1}$. In general, the intervals in this confidence sequence will be larger than those in \Cref{pro:psi_cs}. However, when each $f(X, y)$ is a Bernoulli random variable, as is the case in CDF estimation (\Cref{sec:cdf}), this confidence sequence is equivalent to the one in \Cref{pro:psi_cs}.

Leveraging Pinsker's inequality, which for Bernoulli random variables reads $\kl(p,q) \geq 2(p-q)^2$, we obtain a further relaxation, with a form evoking the classical DKW inequality. With probability at least $1 - \delta$, uniformly on $T$ and $y$, 
\begin{equation}\label{eq:pinsker}F(y)\in\left[\sup_{y_-\in[0,y]}\Big(\hat F_T(y_-) - \sqrt{ \tfrac{1}{2T}\log\tfrac{2\sqrt T}{(y-y_-)\delta}}\Big)\,,\, \inf_{y_+\in[y,1]}\Big(\hat F_T(y_+) + \sqrt{ \tfrac{1}{2T}\log\tfrac{2\sqrt T}{(y_+-y)\delta}}\Big)\right]\,.\end{equation}
Note that the width of this last confidence band is approximately constant w.r.t.\ $y$, and in general can be looser than necessary. However, this is mainly due to the looseness of Pinsker's inequality, and the $\kl$ confidence band can be much tighter.

\paragraph{Variance-dependence} 
As we mentioned in the introduction, our bound is adaptive to the variance of the observations. This is due to the fact that the coin-betting framework yields variance-adaptive concentration inequalities for bounded random variables. More specifically, for any fixed $y$, a simplification\footnote{We use here that $1-\frac{2}{T}\log(2\sqrt T)\geq\frac12$ for $T\geq 10$, and that for positive $A$ and $B$ we have $\sqrt{A+B}\leq\sqrt A+\sqrt B$.} of what is discussed in the proof of Theorem 3 of \cite{orabona2023tight} yields that $\psi_{T, +}^{-1}(y,C) - \hat F_T(y)\leq D_T(y,C)$ and $\hat F_T(y)-\psi_{T, -}^{-1}(y,C) \leq D_T(y,C)$, for any $T\geq 10$ and $C\geq \log(2\sqrt T)$. Here,
$$D_T(y, C) = \tfrac{8C}{3T} + \sqrt{\tfrac{4C\hat V_T(y)}{T}}\,,$$
where $\hat V_T(y) = \frac{1}{T}\sum_{t=1}^T (f(X_t, y)-\hat F_T(y))$ is the empirical variance. Therefore, a relaxation of \Cref{pro:psi_cs} yields the variance adaptive $(1-\delta)$-confidence band (uniform for $T\geq 10$ and $y\in[0,1]$)
$$F_T(y) \in \left[\sup_{y_-\in[0,y]}\left(\hat F_T(y_-) - D_T\big(y_-, \log\tfrac{2\sqrt T}{(y-y_-)\delta}\big)\right),\inf_{y_+\in[y,1]}\left(\hat F_T(y_+) + D_T\big(y_+, \log\tfrac{2\sqrt T}{(y_+-y)\delta}\big)\right)\right]\,.$$

\paragraph{Asymptotic rates} The above relaxations, especially \eqref{eq:pinsker}, suggest an asymptotic rate of $O(\sqrt{T^{-1}\log T})$ for the width of our confidence bands. However, a formal proof of this statement in the general case is non trivial, due to the fact that our upper and lower bounds for $F(y)$ depend on $\hat F_T(y_+)$ and $\hat F_T(y_-)$, rather than on $\hat F_T(y)$. We leave a rigorous treatment of this problem for future work, and instead give a heuristic argument for the typical width when the distribution $P$ generating the sequence of observations has bounded density, and the function $F:[0,1]\to[0,1]$ is Lipschitz. For large $T$, in a sample of $T$ data-points we expect that no more than $O(\sqrt T)$ lie in $[y, y+T^{-1/2}]$ for any fixed $y\in(0,1)$. In particular, we will have $\hat F_T(y+T^{-1/2}) = F(y) + O(T^{-1/2})$. Then, picking $y_\pm = y\pm\Omega(T^{-1/2})$ (for $T$ large enough so that these values lie in $[0,1]$) we obtain from \eqref{eq:pinsker} that $|F(y)-\hat F(y)|\leq O(\sqrt{T^{-1}\log T})$ with high probability. We refer to \Cref{sec:conc} for some further remarks on asymptotic rates, in view of the law of the iterated logarithm.

\section{Application to CDF estimation}
\label{sec:cdf}
We return to the CDF estimation problem described in the introduction, where $f(x,y) = \mathbb{I}\{x \leq y\}$. For simplicity, we assume that $P$ is supported in $[0,1]$.\footnote{For the general case, see the discussion preceding the statement of \Cref{pro:psi_cs}.} Since each $\mathbb{I}\{X_t \leq y\}$ is a Bernoulli random variable, we use the binary relative entropy confidence sequence in \eqref{eqn:klset}. At first glance, it is not obvious how to efficiently compute the confidence intervals in \eqref{eqn:klset}, as each end point is defined implicitly as the solution of a maximisation or minimisation over infinitely many $y_-$ or $y_+$. However, when $f(x,y) = \mathbb{I}\{x \leq y\}$, we can exploit that the mapping $y \mapsto \hat F_T(y) = \frac{1}{T}\sum_{t=1}^{T}\mathbb{I}\{X_t \leq y\}$ is piece-wise constant. More precisely, let
$$C_T(y) = \card\big\{t\in[1:T]\,:\,X_t\leq y\big\}\,,$$
with $\card$ denoting the cardinality. For any $T$, we use $X^{(T)}_{t}$ to denote the $t$\textsuperscript{th} largest element among the first $T$ observations. Also, let $X^{(T)}_{0} = 0$ and $X^T_{(T+1)}=1$. We notice that $\hat F_T(y) = C_T(y)/T$ is equal to the constant $t/T$ on the interval $[X^{(T)}_{t}, X^{(T)}_{t+1})$. Since $\kl^{-1}_+$ is increasing in both of its arguments, for every interval $[a,b]\subseteq [X^{(T)}_{t}, X^{(T)}_{t+1})$, with $a\geq y$, we have $\inf_{y'\in[a, b]}\kl^{-1}_+\big(\hat F_T(y'),\tfrac{1}{T}\log\tfrac{2\sqrt T}{(y'-y)\delta}\big) = \kl^{-1}_+\big(\tfrac{t}{T},\tfrac{1}{T}\log\tfrac{2\sqrt T}{(b-y)\delta}\big)$. In particular,
$$\inf_{y_+\geq y}\kl^{-1}_+\Big(\hat F_T(y_+),\tfrac{1}{T}\log\tfrac{2\sqrt T}{(y_+-y)\delta}\Big) = \min_{t\in[C_T(y):T]}\kl^{-1}_+\Big(\tfrac{t}{T},\tfrac{1}{T}\log\tfrac{2\sqrt T}{(X^{(T)}_{t+1}-y)\delta}\Big)\,.$$
Similarly, one can show that $$\sup_{y_-\leq y}\kl^{-1}_-\Big(\hat F_T(y_-),\tfrac{1}{T}\log\tfrac{2\sqrt T}{(y-y_-)\delta}\Big) = \max_{t\in[ 0:C_T(y)]}\kl^{-1}_-\Big(\tfrac{t}{T},\tfrac{1}{T}\log\tfrac{2\sqrt T}{(y-X^{(T)}_{t})\delta}\Big)\,.$$
Therefore, the minimum and maximum are always achieved on a finite set of at most $T$ points.

\subsection*{Experimental comparison}\label{sec:toy}
\begin{figure}[t!]
    \centering
    \includegraphics[width=\textwidth]{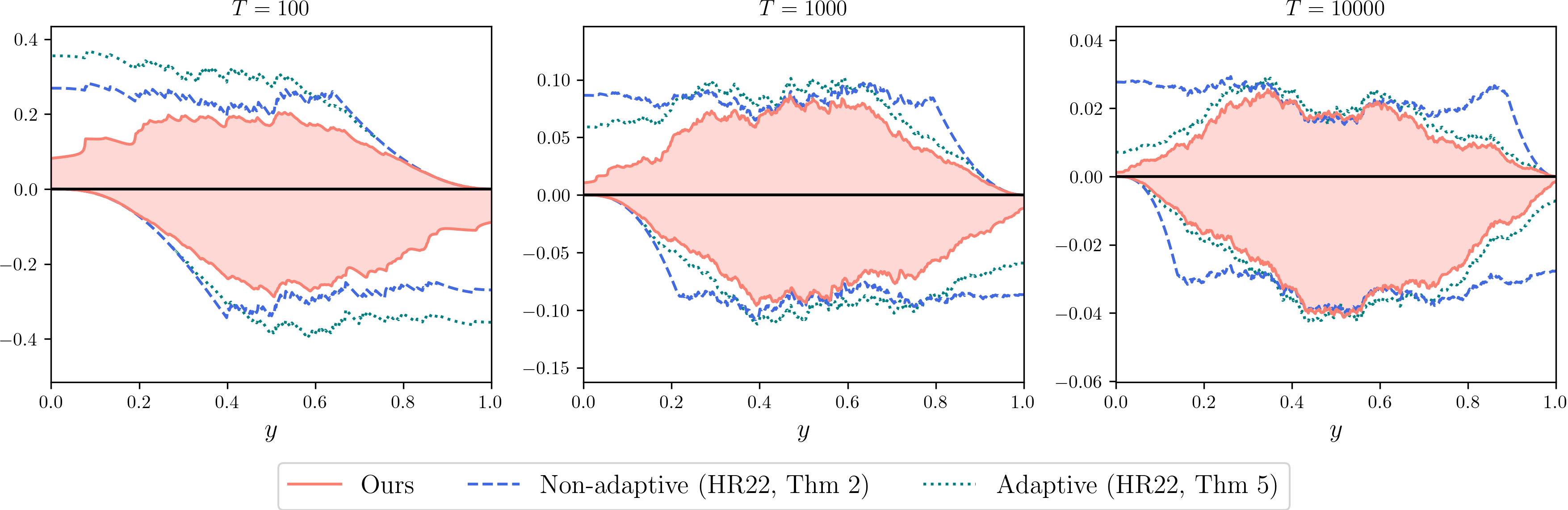}
    \caption{Confidence bands ($\delta=0.05$) for $F(y) = \sin\big(\frac\pi2\sqrt y\big)^6$. The difference between the confidence bands and the true CDF is reported on the vertical axis. The confidence bands via our approach are plotted with a solid line. The dashed lines denote the confidence bands from Theorem 2 in \cite{howard2022sequential}, $\|\hat F_T-F\|_\infty \leq 0.85\sqrt{\tfrac1T\big(\log\log(eT) + 0.8\log(1612/\delta)\big)}$. The dotted lines depict the variance-adaptive implicit confidence bands from Theorem 5 in \cite{howard2022sequential}.}
    \label{fig:comparison}
\end{figure}
We compare our CDF confidence bands against both the variance-adaptive and non-adaptive confidence bands from \citep{howard2022sequential}. To the authors knowledge, this is the only currently available work to provide tight anytime-valid and variance adaptive confidence bands for the CDF. We tested our method for different sample sizes ($T=100$, $1000$, and $10000$), on a toy dataset, made of i.i.d.\ draws from a distribution supported on $[0,1]$, with CDF $$F(y) = \sin\big(\tfrac\pi2\sqrt y\big)^6\,.$$ The results are reported in \Cref{fig:comparison}. Our approach (solid line) consistently outperforms the variance-adaptive method (dotted line) proposed in \cite{howard2022sequential}. Due to the lack of variance-adaptiveness, the non-adaptive confidence band (dashed line) proposed in the same work is much looser when $F(y)$ is close to $0$ and $1$. However, as the dataset gets larger, this last method performs slightly better than ours for values where $F(y)$ is far from $0$ and $1$. This is however expected, as the non-adaptive confidence band has an asymptotic width that matches the law of the iterated logarithm ($\sqrt{T^{-1}\log\log T}$), while ours cannot shrink faster than $\sqrt{T^{-1}\log T}$. We refer to the discussion in \Cref{sec:conc} for how to improve the convergence rate of our approach. It is however worth noticing the remarkable tightness of our approach for the smallest dataset.

\section{Technicalities}
We fix an integer $T\geq 1$. For $z$ and $z'$ in $[0,1]^T$, we write $z\succeq z'$ if $z_t\geq z'_t$ for all $t\in[1:T]$. For $\mu\in[0,1]$, we set $\A_\mu = [-\tfrac{1}{(1-\mu)},\tfrac1\mu]$. We define
$$\Psi(z,\mu) = \sup_{\alpha\in \A_\mu}\frac1T\sum_{t=1}^{T}\log\big(1+\alpha(z_t-\mu)\big)\,.$$
Note that $\Psi(z,\mu)$ is non-negative (as $0\in \A_\mu$). Letting $\mu_z=\frac{1}{T}\sum_{t=1}^{T}z_t$, we have $\Psi(z,\mu_z) = 0$. Indeed, Jensen's inequality yields that $\frac1T\sum_{t=1}^{T}\log\big(1+\alpha(z_t-\mu)\big)\leq 0$ for any $\alpha$, so for $\alpha=0$ the sup is achieved. In particular, $\Psi(z,\mu)$ has a minimum for $\mu=\mu_z$. Moreover, Theorem 4 in \cite{orabona2023tight} shows that it is quasi-convex in $\mu$. In particular, it is non-decreasing in $\mu$, for $\mu\geq\mu_z$, and non-increasing for $\mu\leq\mu_z$.

\begin{lemma}\label{lemma:Psi}
    Fix $z$ and $z'$, such that $z\succeq z'$.  Let $\mu\in [0,1]$. If $\mu\geq\mu_z$, then
    $\Psi(z,\mu)\leq\Psi(z',\mu)$.
    Conversely, if $\mu\leq\mu_{z'}$, we have $\Psi(z,\mu)\geq\Psi(z',\mu)$.
\end{lemma}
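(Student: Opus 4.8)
The plan is to reduce everything to a statement about a single coordinate, since the constraint $z \succeq z'$ is a coordinatewise inequality. Fix $\alpha \in \A_\mu$ and consider the function $t \mapsto \log(1 + \alpha(t - \mu))$ for $t \in [0,1]$; note it is well-defined (finite) precisely because $\alpha \in \A_\mu$ guarantees $1 + \alpha(t - \mu) \geq 0$ on $[0,1]$, and the constraint $\alpha \in \A_\mu$ does not depend on $z$ or $z'$ at all, so the feasible set of the supremum is the same for both. The derivative in $t$ is $\alpha / (1 + \alpha(t - \mu))$, which has the sign of $\alpha$. Hence, if $\alpha \geq 0$, the map $t \mapsto \log(1+\alpha(t-\mu))$ is non-decreasing, so $z \succeq z'$ gives $\sum_t \log(1+\alpha(z_t - \mu)) \geq \sum_t \log(1+\alpha(z'_t - \mu))$; if $\alpha \leq 0$, it is non-increasing and the inequality reverses.

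Now I would combine this with the location of the minimiser. Recall from the discussion preceding the lemma that $\mu \mapsto \Psi(z,\mu)$ attains its minimum (value $0$) at $\mu_z$ and is non-decreasing for $\mu \geq \mu_z$; the key additional fact I want is that for $\mu \geq \mu_z$ the supremum defining $\Psi(z,\mu)$ is attained (or approached) at a \emph{non-negative} $\alpha$. This is intuitive: the best constant bet when the empirical mean $\mu_z$ is below the candidate value $\mu$ is to "bet down", i.e.\ $\alpha \geq 0$ in the sign convention here; formally, for $\alpha < 0$ one has $\frac1T\sum_t \log(1+\alpha(z_t-\mu)) \leq 0 = \Psi(z,\mu_z) \leq \Psi(z,\mu)$ is not quite enough, so instead I would argue via concavity of $\alpha \mapsto \frac1T\sum_t\log(1+\alpha(z_t-\mu))$ together with the fact that its derivative at $\alpha = 0$ equals $\mu_z - \mu \leq 0$, forcing the maximiser to lie in $(-\infty, 0]$ intersected with $\A_\mu$, i.e.\ to be $\leq 0$ — wait, the sign: derivative at $0$ is $\frac1T\sum_t(z_t - \mu) = \mu_z - \mu \le 0$ when $\mu \ge \mu_z$, so by concavity the maximiser $\alpha^\star \le 0$. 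So in the regime $\mu \geq \mu_z$ the optimal bet is $\alpha^\star \leq 0$, and then the monotonicity computation above (the $\alpha \leq 0$ case) gives $\Psi(z,\mu) = \frac1T\sum_t\log(1+\alpha^\star(z_t-\mu)) \leq \frac1T\sum_t\log(1+\alpha^\star(z'_t-\mu)) \leq \Psi(z',\mu)$, which is the first claim.

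For the second claim, the argument is symmetric. When $\mu \leq \mu_{z'}$, note $\mu_{z'} \leq \mu_z$ (since $z \succeq z'$ implies $\mu_z \geq \mu_{z'}$), so $\mu \leq \mu_z$ as well; the derivative at $\alpha = 0$ of the objective for $z'$ is $\mu_{z'} - \mu \geq 0$, so by concavity the optimal $\alpha^\star$ for $\Psi(z',\mu)$ satisfies $\alpha^\star \geq 0$. Then $z \succeq z'$ and the $\alpha \geq 0$ monotonicity give $\Psi(z',\mu) = \frac1T\sum_t\log(1+\alpha^\star(z'_t-\mu)) \leq \frac1T\sum_t\log(1+\alpha^\star(z_t-\mu)) \leq \Psi(z,\mu)$.

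The main obstacle is the careful handling of the supremum: I need that the sign of the optimal betting fraction is controlled in each regime, and to deal cleanly with boundary and non-attainment cases (e.g.\ $\mu \in \{0,1\}$, or the sup approached at an endpoint of $\A_\mu$ where the objective may diverge to $-\infty$). I would handle these by passing to $\epsilon$-optimal $\alpha$'s if needed and using the conventions on $\log 0$ and $0 \times \infty$ already fixed in the paper, noting that in every case it suffices to compare the objective at a \emph{fixed} feasible $\alpha$ of the correct sign, since $\A_\mu$ is the same interval for $z$ and $z'$. The concavity of $\alpha \mapsto \frac1T\sum_t \log(1 + \alpha(z_t - \mu))$ on $\A_\mu$ is standard (sum of concave functions), and the derivative-at-zero computation is a one-line calculation, so apart from the bookkeeping the argument is short.
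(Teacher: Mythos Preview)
Your proposal is correct and follows essentially the same route as the paper: restrict the supremum to $\alpha\le 0$ when $\mu\ge\mu_z$ (the paper does this via Jensen, you via concavity plus the sign of the derivative at $\alpha=0$), and then use coordinatewise monotonicity of $t\mapsto\log(1+\alpha(t-\mu))$ for $\alpha\le 0$. The only cosmetic difference is that the paper bounds the sup over $[-\tfrac{1}{1-\mu},0]$ directly rather than invoking a maximiser $\alpha^\star$, which sidesteps the non-attainment bookkeeping you flag at the end.
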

\begin{proof}
    We prove the first statement, the second follows similarly. Let $\mu\geq\mu_z$. From Jensen's inequality, $\frac1T\sum_{t=1}^{T}\log\big(1+\alpha(z_t-\mu)\big)\leq \log\left(1+\alpha (\mu_z-\mu)\right)\leq\log 1 = 0$,
    for any $\alpha>0$. In particular, we obtain that
    $\Psi(z,\mu) = \sup_{\alpha\in[-\frac{1}{1-\mu},0]}\frac1T\sum_{t=1}^{T}\log\big(1+\alpha(z_t-\mu)\big)$. If $\alpha\in[-\frac{1}{1-\mu},0]$, $\log\big(1+\alpha(z_t-\mu)\big)\leq\log\big(1+\alpha(z'_t-\mu)\big)$, since $z\succeq z'$. Therefore,
    $$\Psi(z,\mu) = \sup_{\alpha\in[-\frac{1}{1-\mu},0]}\frac1T\sum_{t=1}^{T}\log\big(1+\alpha(z_t-\mu)\big)\leq \sup_{\alpha\in[-\frac{1}{1-\mu},0]}\frac1T\sum_{t=1}^{T}\log\big(1+\alpha(z'_t-\mu)\big)\leq\Psi(z',\mu)\,,$$ and so we conclude.\qed
\end{proof}

Fix $C\geq 0$ (possibly infinite). We define the upper and lower inverses of $\Psi$ as $$\Psi^{-1}_+(z,C) = \sup\big\{\mu\in[0,1]\,:\,\Psi(z,\mu)\leq C\big\}\,,\qquad\Psi^{-1}_-(z,C) = \inf\big\{\mu\in[0,1]\,:\,\Psi(z,\mu)\leq C\big\}\,.$$

\begin{lemma}\label{lemma:monotonic}
    For any fixed $C\geq 0$, the mappings $z\mapsto\Psi^{-1}_+(z,C)$ and $z\mapsto\Psi^{-1}_-(z,C)$ are non-decreasing (with respect to the partial ordering $\succeq$).
\end{lemma}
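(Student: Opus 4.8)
The plan is to reduce everything to \Cref{lemma:Psi} together with the quasi-convexity of $\mu\mapsto\Psi(z,\mu)$ recorded above. Fix $C\geq 0$ and $z\succeq z'$; note that $z\succeq z'$ immediately gives $\mu_z\geq\mu_{z'}$. Write $S(z,C)=\{\mu\in[0,1]:\Psi(z,\mu)\leq C\}$ for the sublevel set, so that $\Psi^{-1}_+(z,C)=\sup S(z,C)$ and $\Psi^{-1}_-(z,C)=\inf S(z,C)$, and likewise for $z'$. Since $\Psi(z,\mu_z)=0\leq C$, the set $S(z,C)$ is non-empty and contains $\mu_z$; and since $\Psi(z,\cdot)$ is quasi-convex, $S(z,C)$ is an interval. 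Consequently $S(z',C)$ is an interval containing $\mu_{z'}$, so $[\mu_{z'},\Psi^{-1}_+(z',C))\subseteq S(z',C)$, and every $\mu<\Psi^{-1}_-(z',C)$ satisfies $\Psi(z',\mu)>C$.

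For the upper inverse: if $\Psi^{-1}_+(z',C)\leq\mu_z$ we are done, because $\mu_z\in S(z,C)$ forces $\Psi^{-1}_+(z,C)\geq\mu_z\geq\Psi^{-1}_+(z',C)$. Otherwise take any $\mu$ with $\mu_z\leq\mu<\Psi^{-1}_+(z',C)$ (this interval is non-empty). Then $\mu\geq\mu_z\geq\mu_{z'}$, so $\mu\in[\mu_{z'},\Psi^{-1}_+(z',C))\subseteq S(z',C)$, i.e.\ $\Psi(z',\mu)\leq C$; and since $\mu\geq\mu_z$, the first half of \Cref{lemma:Psi} gives $\Psi(z,\mu)\leq\Psi(z',\mu)\leq C$, hence $\mu\in S(z,C)$ and $\Psi^{-1}_+(z,C)\geq\mu$. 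Letting $\mu\uparrow\Psi^{-1}_+(z',C)$ yields $\Psi^{-1}_+(z,C)\geq\Psi^{-1}_+(z',C)$.

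For the lower inverse: take any $\mu<\Psi^{-1}_-(z',C)=\inf S(z',C)$. Since $\mu_{z'}\in S(z',C)$ we have $\Psi^{-1}_-(z',C)\leq\mu_{z'}$, so $\mu<\mu_{z'}$, and $\mu\notin S(z',C)$ gives $\Psi(z',\mu)>C$. The second half of \Cref{lemma:Psi} applies (as $\mu\leq\mu_{z'}$ and $z\succeq z'$) and gives $\Psi(z,\mu)\geq\Psi(z',\mu)>C$, so $\mu\notin S(z,C)$. Since this holds for every $\mu<\Psi^{-1}_-(z',C)$, we conclude $\Psi^{-1}_-(z,C)=\inf S(z,C)\geq\Psi^{-1}_-(z',C)$.

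The only real subtlety is bookkeeping: one must check that at each point where \Cref{lemma:Psi} is invoked its hypothesis holds ($\mu\geq\mu_z$ for the upper bound, $\mu\leq\mu_{z'}$ for the lower bound), and this is precisely what forces us to use the comparison $\mu_z\geq\mu_{z'}$ and the interval structure of the sublevel sets. No continuity or measurability considerations are needed here, since we only ever use that a sublevel set of a quasi-convex function is an interval containing its minimiser.
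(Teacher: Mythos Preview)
Your proof is correct and follows essentially the same route as the paper: both reduce the monotonicity of $\Psi^{-1}_\pm$ to \Cref{lemma:Psi} by comparing the sublevel sets $\{\mu:\Psi(z,\mu)\leq C\}$ and $\{\mu:\Psi(z',\mu)\leq C\}$ on the region where the lemma applies, using that $\mu_z$ (resp.\ $\mu_{z'}$) always lies in its own sublevel set. Your version is just more explicit about the case split and the role of quasi-convexity, whereas the paper compresses this into a single set-inclusion line.
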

\begin{proof}
    We prove that $\Psi^{-1}_+(\cdot,C)$ is non-decreasing, as the proof for $\Psi^{-1}_-(\cdot,C)$ is analogous. Fix $z$ and $z'$, with $z\succeq z'$. For any $\mu\geq\mu_z$ we have that $\Psi(z,\mu)\leq\Psi(z',\mu)$ by \Cref{lemma:Psi}. Thus, $$\big\{\mu\geq\mu_z\,:\,\Psi(z,\mu)\leq C\big\} \supseteq \big\{\mu\geq\mu_z\,:\,\Psi(z',\mu)\leq C\big\}\,.$$ As $\{\mu\geq\mu_z\,:\,\Psi(z,\mu)\leq C\}$ is non-empty ($\mu_z$ belongs to it), the conclusion follows.\qed 
\end{proof}

\section{Conclusion}\label{sec:conc}

In this work, we developed a method for uniform mean estimation of monotonic processes. When applied to the problem of CDF estimation, our approach yields CDF confidence bands that have a relatively simple expression, and are amenable to efficient computation. In our experiments, we found that our confidence bands are numerically tight, especially for small sample sizes.

We comment on a limitation of our work. Throughout the paper, we used the fact that the log regret of the KT forecaster is at most $\log(2\sqrt{T})$. However, this means that our confidence bands shrink at best at a rate of $\sqrt{T^{-1}\log T}$ (where the variance is non-zero), rather than the fastest possible rate of $\sqrt{T^{-1}\log\log T}$. To match the rate of the law of the iterated logarithm, one could use the betting algorithm from Section IV of \cite{orabona2023tight}, though this would come at the expense of a more complicated upper bound on $\log(R_T(y))$.

As a final remark, while we have demonstrated empirically that our approach leads to numerically tight anytime-valid confidence bands, we cannot rule out the possibility that other e-value based approaches would be even more effective. We leave open the question of what are the log-optimal \citep{grunwald2024safe, larsson2024numeraire} and admissible \citep{ramdas2022admissible} e-variables in this context, and the related problem of characterising the optimal e-class, in the sense of \cite{clerico2024optimality,clerico2024optimal}.

\paragraph{Acknowledgements} 
We thank Gergely Neu for the discussions that inspired this work, and Amir Reza Asadi for insightful comments. Eugenio and Hamish are funded by the European Research Council (ERC), under the European Union’s Horizon 2020 research and innovation programme (grant agreement 950180). Patrick is funded by UK Research and Innovation (UKRI) under the UK government’s Horizon Europe funding guarantee (grant number EP/Y028333/1).

\bibliography{bib.bib}
\end{document}